\newtheorem{theorem}{Theorem}
\newtheorem{lemma}[theorem]{Lemma}
\newtheorem{mydef}{Definition}
\newtheorem{mydef*}{Definition*}
\title{ON THE LINEAR INDEPENDENCE OF RADICALS}
\author{Sourav Koner$^1$\\harakrishnaranusourav@gmail.com$^1$ \and Dhiren Kumar Basnet$^2$\\dbasnet@tezu.ernet.in$^2$}
\date{Department of Mathematical Sciences\\Tezpur University, Tezpur, Assam, India, 784028}
\begin{document}
\maketitle

\begin{abstract}
We provide an alternative proof that the finite rational linear combination of radicals, under certain constraint, are linearly independent over $\mathbb{Q}$.   
\end{abstract}

{ {\em AMS classification:} 12Exx} 

\section{Introduction}
An irrational number is a real number that cannot be expressed as a fraction with the numerator as integers and denominator as nonzero integers. One of the most famous irrational number is $\sqrt{2}$, sometimes called Pythagoras's constant. Proof of the irrationality of $\sqrt{2}$ can be obtained in the following way: assume $\sqrt{2}$ is rational, that is, it can be expressed as a fraction of the form $\frac{w}{y}$, where $w$ and $y$ are two relatively prime positive integers. Now, since $\sqrt{2} = \frac{w}{y}$, we have $2 = \frac{w^{2}}{y^{2}}$, or $w^{2} = 2y^{2}$. Since $2y^{2}$ is even, $w^{2}$ must be even and since $w^{2}$ is even, so is $w$. Let $w = 2z$. We have $4z^{2} = 2y^{2}$ and thus $y^{2} = 2z^{2}$. Since $2z^{2}$ is even, $y^{2}$ is even, and since $y^{2}$ is even, so is y. However, two even numbers cannot be relatively prime, so $\sqrt{2}$ cannot be expressed as a rational fraction; hence $\sqrt{2}$ is irrational. Similarly, proving that the number $\sqrt{2} + \sqrt{3}$ is irrational can be done in the following manner: let $\sqrt{2} + \sqrt{3}$ be a rational number, say $x$. $x = \sqrt{2} + \sqrt{3}$ implies $x - \sqrt{2} = \sqrt{3}$. Squaring on both sides, we obtain, $x^{2} + 2 - 2x\sqrt{2} = 3$, or $\frac{x^{2} - 1}{2x} = \sqrt{2}$. Now, $\frac{x^{2} - 1}{2x}$ is a rational number. But this contradicts the fact that $\sqrt{2}$ is an irrational number. So, our supposition is false. Therefore, $\sqrt{2} + \sqrt{3}$ is an irrational number. The techniques which we have used to establish the irrationality of the above two numbers, if we use the same techniques on other rational linear combination of radicals, for example $\pm \sqrt[8]{12} \pm \frac{2}{3}\sqrt[27]{108} \pm \frac{1}{2}\sqrt{12}$, then it would become cumbersome. 

Let $U$ denotes the set of all radicals which are irrationals, that is, $$U = \{\sqrt[m]{b} \mid b \in \mathbb{Q}^{+}, m \in \mathbb{N} - \{1\}, \sqrt[m]{b} \notin \mathbb{Q}\}.$$ 
Further, if $\mathcal{S}$ denotes the set of all finite rational linear combination of radicals in $U$ such that if $\alpha \in \mathcal{S}$ then the terms in the expression of $\alpha$ do not trivially cancel out by simplifying the radicals in the expression of $\alpha$ (for example, we do not consider $3\sqrt{12} - 5\sqrt{3} - \sqrt[4]{9}$ to be an element of $\mathcal{S}$ as $3\sqrt{12} - 5\sqrt{3} - \sqrt[4]{9} = 6\sqrt{3} - 5\sqrt{3} - \sqrt{3} = 0$), then we prove the following.

\begin{theorem}
If $\alpha \in \mathcal{S}$ then $\alpha$ cannot be expressed as $\frac{p}{q}$, where $p \in \mathbb{Z}$ and $q \in \mathbb{N}$.
\end{theorem}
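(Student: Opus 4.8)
The plan is to recast the statement as an assertion about $\mathbb{Q}$-linear independence inside a single algebraic number field. First I would normalize the radicals occurring in $\alpha$. Given $\sqrt[m]{b}$ with $b\in\mathbb{Q}^{+}$, after clearing the denominator of $b$ and extracting from the radicand the largest $m$-th power, one can write $\sqrt[m]{b}=r\cdot\prod_{j}p_j^{a_j/m}$ with $r\in\mathbb{Q}^{+}$, the $p_j$ distinct primes and $0\le a_j<m$; reducing each exponent to lowest terms yields what I will call the canonical surd form. Two radicals have the same canonical surd exactly when their quotient lies in $\mathbb{Q}$ — this is elementary, since $\prod_j p_j^{g_j}\in\mathbb{Q}$ with $g_j\in\mathbb{Q}$ forces every $g_j\in\mathbb{Z}$ by unique factorization — so the clause in the definition of $\mathcal{S}$ forbidding ``trivial cancellation'' translates into the statement that, after simplification, $\alpha$ is a nonzero rational combination of pairwise distinct canonical surds, none of which is rational (the last point because every element of $U$ is irrational).

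Next I would pass to a common field. Let $N$ be the least common multiple of all radical indices occurring in $\alpha$, let $p_1,\dots,p_r$ be all the primes appearing in the radicands, and set $K=\mathbb{Q}(\sqrt[N]{p_1},\dots,\sqrt[N]{p_r})$. Since each index divides $N$, every canonical surd in $\alpha$ equals $\prod_j(\sqrt[N]{p_j})^{e_j}$ for a unique exponent vector $(e_1,\dots,e_r)$ with $0\le e_j<N$, and this vector is zero precisely when the surd is rational. The decisive step is to prove $[K:\mathbb{Q}]=N^{r}$, equivalently that the $N^{r}$ monomials $\prod_j(\sqrt[N]{p_j})^{e_j}$, $0\le e_j<N$, form a $\mathbb{Q}$-basis of $K$. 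I would argue by induction on $r$: the case $r=1$ is Eisenstein's criterion for $x^{N}-p_1$, and the inductive step amounts to showing that $x^{N}-p_r$ remains irreducible over $F=\mathbb{Q}(\sqrt[N]{p_1},\dots,\sqrt[N]{p_{r-1}})$, which by the standard irreducibility test for binomials reduces to checking that $p_r$ is not a $q$-th power in $F$ for any prime $q\mid N$ (plus a minor check when $4\mid N$).

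That last verification is where I expect the real work to be. The clean way is a ramification argument: if $p_r$ were a $q$-th power in $F$, then the prime $p_r$ — which has valuation $1$ over $\mathbb{Q}$ — would ramify in $F$ with index divisible by $q$, whereas the ramification of $F/\mathbb{Q}$ at $p_r$ is visibly too tame for this (only the $p_i$ with $i<r$ and the primes dividing $N$ can ramify there). Alternatively one adjoins $\zeta_N$, uses Kummer theory to get $[\mathbb{Q}(\zeta_N,\sqrt[N]{p_1},\dots,\sqrt[N]{p_r}):\mathbb{Q}(\zeta_N)]=N^{r}$ from the multiplicative independence of distinct primes modulo $N$-th powers, and then recovers $[K:\mathbb{Q}]=N^{r}$ by a degree count. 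Either route is the heart of the argument.

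With the basis in hand the conclusion is short. Write $\alpha=\sum_{i=1}^{n}c_i\gamma_i$ with $c_i\in\mathbb{Q}\setminus\{0\}$ and the $\gamma_i$ pairwise distinct canonical surds; by the above each $\gamma_i$ is one of the basis monomials and has a nonzero exponent vector, and distinct $\gamma_i$ give distinct basis elements. If $\alpha=p/q$, then $\alpha=(p/q)\cdot 1$ is its expansion in the basis, whereas $\sum_i c_i\gamma_i$ is another expansion in which the basis element $1$ (the zero exponent vector) does not occur; comparing coefficients and using uniqueness of the basis representation forces every $c_i=0$, contradicting $c_i\ne 0$. Hence $\alpha$ admits no such representation, i.e.\ $\alpha$ is irrational; in fact the same argument shows that pairwise non-proportional irrational radicals are $\mathbb{Q}$-linearly independent, which is the statement the abstract alludes to.
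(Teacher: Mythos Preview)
Your reduction is sound: once every term of $\alpha$ is written as a rational multiple of a monomial $\prod_j(\sqrt[N]{p_j})^{e_j}$ with $0\le e_j<N$, the irrationality follows immediately from $[K:\mathbb{Q}]=N^{r}$ for $K=\mathbb{Q}(\sqrt[N]{p_1},\dots,\sqrt[N]{p_r})$. That degree formula is true, but both of your sketched proofs of it --- which you yourself call ``the heart of the argument'' --- have real problems. The Kummer route is not merely incomplete, it is wrong as stated: distinct rational primes need \emph{not} be independent modulo $N$-th powers in $\mathbb{Q}(\zeta_N)^{\times}$, so $[\mathbb{Q}(\zeta_N,\sqrt[N]{p_1},\dots,\sqrt[N]{p_r}):\mathbb{Q}(\zeta_N)]$ can be strictly smaller than $N^{r}$. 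For instance $\sqrt{2}=\zeta_8+\zeta_8^{-1}\in\mathbb{Q}(\zeta_8)$, whence $[\mathbb{Q}(\zeta_8,\sqrt[8]{2}):\mathbb{Q}(\zeta_8)]=4$, not $8$; no straightforward degree count then recovers $[K:\mathbb{Q}]=N^{r}$ from the cyclotomic side. The ramification route is fine when $p_r\nmid N$ (then $p_r$ is genuinely unramified in $F$ and the valuation argument goes through), but your phrase ``visibly too tame'' hides the case $p_r\mid N$, where $p_r$ can ramify in $F=\mathbb{Q}(\sqrt[N]{p_1},\dots,\sqrt[N]{p_{r-1}})$ and you have given no reason why its ramification index is not divisible by~$q$.

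The paper, by contrast, never computes $[K:\mathbb{Q}]$. It builds (Lemma~3) a ``reduced set'' $S=\{\sqrt[m_i]{b_i}\}$ much as you do, but then argues by contradiction: assuming $\alpha\in\mathbb{Q}$, a gcd with $X^{m_k}-b_k$ forces some power $\sqrt[m_k]{b_k^{\,d}}$ (with $0<d<m_k$) into the smaller field $K=\mathbb{Q}(S\setminus\{\sqrt[m_k]{b_k}\})$. The key device is then the observation (from Lemma~2 and the remark following it) that any product of the reduced radicals has minimal polynomial of the binomial form $X^{s}-c$, hence vanishing subleading coefficient; combined with nondegeneracy of the trace pairing on $K$, this forces some product $\sqrt[m_k]{b_k^{\,d}}\cdot\beta^{*}$ (with $\beta^{*}$ a monomial in the other generators) to be rational, contradicting the reduced-set condition directly. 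This trace/minimal-polynomial argument is the paper's substitute for the degree computation you are missing.
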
 

An equivalent form of the above theorem has already been proved\cite{1940}\cite{1974}, which says that finite rational linear combination of radicals, under certain constraint, are linearly independent. The main motivation behind this note is to provide an elegant alternative proof of Theorem 1. 

\begin{mydef}
An element $\sqrt[m]{b} \in U$ is said to be a reduced irrational if it can not be written of the form $e\sqrt[n]{d}$ where $n \in \mathbb{N}$, $e, d \in \mathbb{Q}^{+}$ and $n < m$.
\end{mydef}

For example, $\sqrt[3]{\frac{81}{5}}, \sqrt[4]{9}$ are not a reduced irrational numbers as $\sqrt[3]{\frac{81}{5}} = 3\sqrt[3]{\frac{3}{5}}$ and $\sqrt[4]{9} = \sqrt{3}$, whereas $\sqrt[3]{\frac{9}{5}}, \sqrt[4]{3^{3}}$ is a reduced irrational number. 

\begin{lemma}
Let $\sqrt[m]{b}$ be a reduced irrational number and $l_{0}, l_{1}, \ldots, l_{t} \in \mathbb{Q}$ be such that $l_{t} \ne 0$. If $t < m$, then $l_{0} + l_{1}\sqrt[m]{b} + \cdots + l_{t}\sqrt[m]{b^{t}}$ is an irrational number.
\end{lemma}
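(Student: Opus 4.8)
The plan is to argue by contradiction; the point is that a rational value for the combination would force $x^m-b$ to acquire a nonconstant proper factor over $\mathbb{Q}$, which we rule out by an elementary root-of-unity computation. Write $\theta=\sqrt[m]{b}$ for the positive real radical, and assume $t\ge 1$ (for $t=0$ the expression is just $l_0$ and there is nothing to prove). Suppose, contrary to the claim, that $\alpha:=l_0+l_1\theta+\cdots+l_t\theta^t$ is rational. Then $\theta$ is a root of $f(x):=l_tx^t+\cdots+l_1x+(l_0-\alpha)\in\mathbb{Q}[x]$, which is nonzero of degree exactly $t$ since $l_t\ne 0$, with $1\le t<m$. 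As $\theta$ is also a root of $g(x):=x^m-b$, I would pass to $h:=\gcd(f,g)\in\mathbb{Q}[x]$, taken monic; writing $h=af+bg$ with $a,b\in\mathbb{Q}[x]$ via B\'ezout shows $h(\theta)=0$, so $h$ is nonconstant, while $h\mid f$ forces $1\le s:=\deg h\le t<m$. Thus $x^m-b$ would have a nonconstant divisor of degree $s<m$ over $\mathbb{Q}$.

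The heart of the argument is to extract arithmetic information from such a divisor by factoring over $\mathbb{C}$. We have $x^m-b=\prod_{j=0}^{m-1}(x-\zeta^j\theta)$ with $\zeta=e^{2\pi i/m}$ a primitive $m$-th root of unity, and these $m$ linear factors are pairwise distinct, so the monic divisor $h$ must be a product of $s$ of them; its constant term is therefore $h(0)=(-1)^s\zeta^{E}\theta^{s}$ for some integer $E$. Since $h\in\mathbb{Q}[x]$ we get $\zeta^{E}\theta^{s}\in\mathbb{Q}$, and because $\theta^{s}$ is a positive real, $\zeta^{E}=(\zeta^{E}\theta^{s})/\theta^{s}$ is a real root of unity, hence $\zeta^{E}=\pm1$, and so $\theta^{s}\in\mathbb{Q}^{+}$. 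Combining $\theta^{s}\in\mathbb{Q}^{+}$ with $\theta^{m}=b\in\mathbb{Q}^{+}$ via B\'ezout — write $d:=\gcd(s,m)=us+vm$, so $\theta^{d}=(\theta^{s})^{u}b^{v}\in\mathbb{Q}^{+}$ — yields $c:=\theta^{d}\in\mathbb{Q}^{+}$ with $1\le d\le s<m$. But then $\sqrt[m]{b}=\theta=(\theta^{d})^{1/d}=\sqrt[d]{c}=1\cdot\sqrt[d]{c}$ with $d<m$ and $1,c\in\mathbb{Q}^{+}$, contradicting the hypothesis that $\sqrt[m]{b}$ is a reduced irrational. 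Hence $\alpha$ cannot be rational.

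I expect the polynomial bookkeeping (degrees, separability of $x^m-b$, the gcd) to be routine. The genuinely delicate points, where I would concentrate the write-up, are three: justifying that the rational gcd $h$ actually vanishes at $\theta$ so that it is nonconstant, which is exactly the B\'ezout observation; discarding the spurious root-of-unity factor $\zeta^{E}$, which is precisely where positivity and realness of the principal $m$-th root are used (and where the argument would break over $\mathbb{C}$); and, most importantly, replacing the ad hoc exponent $s$ by $d=\gcd(s,m)$, since only an exponent of this kind (a genuine divisor-type exponent $<m$) produces a simplification of the exact shape forbidden by Definition~1. The main obstacle is thus recognizing that $\gcd(s,m)$ is the correct bridge from "some power of $\theta$ below the $m$-th is rational" to "$\sqrt[m]{b}$ is not reduced."
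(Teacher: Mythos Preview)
Your argument is correct and follows the paper's route exactly: pass to $h=\gcd(f,g)\in\mathbb{Q}[x]$, observe it is nonconstant of degree $s<m$, and read off from its constant term that $\theta^{s}\in\mathbb{Q}^{+}$, contradicting reducedness (the paper reaches $\theta^{s}\in\mathbb{Q}^{+}$ by taking the modulus of the constant term, which is your $\zeta^{E}=\pm1$ observation in disguise). Your final B\'ezout step replacing $s$ by $d=\gcd(s,m)$ is unnecessary, since Definition~1 does not require the smaller index $n$ to divide $m$: once $\theta^{s}\in\mathbb{Q}^{+}$ with $1\le s<m$, one already has $\sqrt[m]{b}=1\cdot\sqrt[s]{\theta^{s}}$ in the forbidden form, which is exactly where the paper stops.
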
 

\begin{proof}  
Suppose $l_{0} + l_{1}\sqrt[m]{b} + \cdots + l_{t}\sqrt[m]{b^{t}} = \frac{p}{q}$ for some $p \in \mathbb{Z}$ and $q \in \mathbb{N}$. If we consider the polynomials $u(X), v(X) \in \mathbb{Q}[X]$ given by, $u(X) = ql_{t}X^{t} + \cdots + ql_{1}X + ql_{0} - p$ and $v(X) = X^m - b$, then observe that $\sqrt[m]{b}$ is a common zero for both of the polynomials. Therefore $gcd(u(X), v(X)) = r(X)$ exists in $\mathbb{Q}[X]$. Since $r(X)$ divides $u(X)$ and $v(X)$, so we have $deg(r) < m$ and the zeros of $r(X)$ are also the zeros of $v(X)$. If $ r_{0} \in \mathbb{Q} - \{0\}$  be the constant term of the polynomial $r(X)$, then $\prod{(\omega\sqrt[m]{b})} = r_{0}$, where the product is taken over all zeros of $r(X)$ and $\omega$ is some $m$-th root of unity. Now taking modulus on both sides, we obtain that $|r_{0}| = (\sqrt[m]{b})^{deg(r)}$, that is, $\sqrt[deg(r)]{|r_{0}|} = \sqrt[m]{b}$. But then this contradicts the fact that $\sqrt[m]{b}$ is a reduced irrational number. Therefore it must be that our supposition is false. This completes the proof.
\end{proof}

Observe that from the above lemma it is easy to understand that what can be the minimal polynomial for a reduced irrational number $\sqrt[m]{b}$. That is, if we take $p = 0$ and $q = 1$ in the above proof, then it tells us that the degree of the minimal polynomial cannot be less than $m$, and it is exactly $m$, namely, $X^{m} - b$. Further consider the finite product $c = \prod_{i}(\sqrt[m_{i}]{b_{i}})^{\epsilon_{i}}$ with $0 \le \epsilon_{i} \le (m_{i} - 1)$ such that $c \notin \mathbb{Q}$ and each $\sqrt[m_{i}]{b_{i}}$ is a reduced irrational number. Since $c^{\prod_{i}m_{i}} \in \mathbb{N}$, so by well ordering principle we can have a smallest positive integer $s$, such that $c^{s} \in \mathbb{Q}$. If we set $c^{s} = k$, then observe that $\sqrt[s]{k}$ is the reduced irrational number and that $X^{s} - k$ is the minimal polynomial for $c$ over $\mathbb{Q}$.

Now, let $\mathcal{I_{L}}$ denotes the set of all reduced irrational numbers. 
 
 \begin{mydef}
 For $k \in \mathbb{N}$, a subset $S = \{\sqrt[m_{i}]b_{i} \mid 1 \le i \le k\}$ of $\mathcal{I_{L}}$ is said to be a reduced set if and only if $\prod_{i = 1}^{k}(\sqrt[m_{i}]{b_{i}})^{\epsilon_{i}} \notin \mathbb{Q}$, $\forall$ $(\epsilon_{1}, \epsilon_{2}, \ldots, \epsilon_{k}) \in V_{1} \times V_{2} \times \cdots \times V_{k} - \{0\}$, where $V_{i} = \{0, 1, 2, \ldots, (m_{i} - 1)\}$ and $0 = (0, 0, \ldots, 0)$.
 \end{mydef}
 Observe that every nonempty subset of a reduced set is again a reduced set.  
 
\begin{lemma}
If $\alpha \in \mathcal{S}$, then there exist a reduced set $S$ such that $\alpha \in \mathbb{Q}(S)$.
\end{lemma}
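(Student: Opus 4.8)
The plan is to first rewrite each radical appearing in $\alpha$ as a rational multiple of a reduced irrational, then assemble the reduced irrationals so obtained into a finite abelian group inside $\mathbb{R}^{+}/\mathbb{Q}^{+}$, and finally apply the structure theorem for finite abelian groups to extract from it a ``basis'' that will serve as the required reduced set. Concretely, write $\alpha=\sum_{j=1}^{r}c_{j}\sqrt[n_{j}]{a_{j}}$ with $c_{j}\in\mathbb{Q}\setminus\{0\}$ and $\sqrt[n_{j}]{a_{j}}\in U$. If $\sqrt[n_{j}]{a_{j}}$ is not a reduced irrational, then by definition it equals $e\sqrt[n']{d}$ with $e,d\in\mathbb{Q}^{+}$ and $n'<n_{j}$, where $\sqrt[n']{d}$ is still irrational (otherwise $\sqrt[n_{j}]{a_{j}}\in\mathbb{Q}$); iterating, the index strictly decreases and the process terminates, giving $\sqrt[n_{j}]{a_{j}}=e_{j}\gamma_{j}$ with $e_{j}\in\mathbb{Q}^{+}$ and $\gamma_{j}\in\mathcal{I_{L}}$. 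Hence $\alpha=\sum_{j}(c_{j}e_{j})\gamma_{j}\in\mathbb{Q}(T)$, where $T=\{\gamma_{1},\dots,\gamma_{r}\}$ is a finite nonempty subset of $\mathcal{I_{L}}$.

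Next I would consider the subgroup $G$ of $\mathbb{R}^{+}/\mathbb{Q}^{+}$ generated by the cosets $\overline{\gamma_{1}},\dots,\overline{\gamma_{r}}$. Writing $\gamma_{j}=\sqrt[m_{j}]{b_{j}}$ we have $\gamma_{j}^{m_{j}}\in\mathbb{Q}^{+}$, so each $\overline{\gamma_{j}}$ has finite order, and being finitely generated and abelian, $G$ is finite. By the structure theorem, $G$ is the internal direct product of cyclic subgroups $\langle g_{1}\rangle,\dots,\langle g_{k}\rangle$ with $|g_{i}|=\mu_{i}\geq 2$; trivial factors are dropped, and at least one remains because $\overline{\gamma_{1}}\neq\overline{1}$. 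For each $i$ choose a representative $\delta_{i}\in\mathbb{R}^{+}$ of $g_{i}$ that is a product of integer powers of the $\gamma_{j}$ — possible since the $\overline{\gamma_{j}}$ generate $G$. Then $\delta_{i}\notin\mathbb{Q}$, and the least positive $s$ with $\delta_{i}^{s}\in\mathbb{Q}$ is exactly $\mu_{i}$ (the order of $\overline{\delta_{i}}=g_{i}$). As in the remark following Lemma 2 this makes $\delta_{i}=\sqrt[\mu_{i}]{\delta_{i}^{\mu_{i}}}$ a reduced irrational: if $\delta_{i}=e\sqrt[n]{d}$ with $n<\mu_{i}$ then $\delta_{i}^{n}\in\mathbb{Q}$, contradicting minimality of $\mu_{i}$.

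I would then set $S=\{\delta_{1},\dots,\delta_{k}\}\subseteq\mathcal{I_{L}}$. That $S$ is a reduced set is just a restatement of the internal-direct-product property: the radical index of $\delta_{i}$ is precisely $\mu_{i}=|g_{i}|$, so for $(\epsilon_{1},\dots,\epsilon_{k})\in V_{1}\times\cdots\times V_{k}$ with $V_{i}=\{0,\dots,\mu_{i}-1\}$, the element $\prod_{i}g_{i}^{\epsilon_{i}}$ of $G$ is trivial only when every $\epsilon_{i}=0$, hence the positive real $\prod_{i}\delta_{i}^{\epsilon_{i}}$ lies in $\mathbb{Q}$ only for the zero tuple. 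Finally, $\alpha\in\mathbb{Q}(S)$: since $\overline{\gamma_{j}}\in G=\langle g_{1},\dots,g_{k}\rangle$ we get $\gamma_{j}=q_{j}\prod_{i}\delta_{i}^{f_{ij}}$ with $q_{j}\in\mathbb{Q}^{+}$ and $f_{ij}\in\mathbb{Z}$, so $\gamma_{j}\in\mathbb{Q}(S)$ and therefore $\alpha=\sum_{j}(c_{j}e_{j})\gamma_{j}\in\mathbb{Q}(S)$.

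The main obstacle is exactly the middle step: the naive generating set $T$ may satisfy hidden multiplicative relations among its elements and so need not be a reduced set, and one must replace it by an independent one without leaving the field it generates. Phrasing things multiplicatively in $\mathbb{R}^{+}/\mathbb{Q}^{+}$ and invoking the structure theorem makes this clean; the delicate point is then only to verify that the new generators $\delta_{i}$ are themselves reduced irrationals whose radical index equals the order of $g_{i}$ in $G$, which is where the minimal-polynomial discussion following Lemma 2 is used.
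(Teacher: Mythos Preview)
Your argument is correct and takes a genuinely different route from the paper's. The paper proceeds constructively via prime factorisation: it writes each radicand as a product of prime powers, reduces each resulting prime-power radical, and then for every prime $p$ that appears takes $\sqrt[\eta]{p}$ with $\eta$ the least common multiple of all radical indices attached to $p$; the reduced-set condition then falls out immediately from the distinctness of the primes. You instead work multiplicatively in $\mathbb{R}^{+}/\mathbb{Q}^{+}$, observe that the radicals generate a finite abelian group there, and invoke the structure theorem to extract an independent cyclic decomposition whose generators lift to the desired reduced set. The paper's method has the virtue of being completely explicit and of showing the extra fact that $S$ may always be chosen to consist of radicals $\sqrt[\eta_{k}]{p_{k}}$ with distinct primes $p_{k}$; your method is cleaner, sidesteps the somewhat ad hoc decomposition $a_{i}=\sum_{j}u_{ij}$ with $u_{ij}\mid\beta_{i}$, and would carry over unchanged to ground fields where unique factorisation into primes is not available.
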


\begin{proof}
Let $\alpha = r_{1}\sqrt[m_1]{c_{1}} + r_{2}\sqrt[m_2]{c_{2}} + \cdots + r_{n}\sqrt[m_n]{c_{n}}$. Assume, without loss of generality, that $\sqrt[m_{i}]{c_{i}} \ne \sqrt[m_{j}]{c_{j}}$ whenever $i \ne j$ where $1 \le i, j \le n$. let $c_{1} = \frac{f_{1}}{s_{1}}, c_{2} = \frac{f_{2}}{s_{2}}, \ldots, c_{n} = \frac{f_{n}}{s_{n}}$. For each $i$ with $1 \le i \le n$, let $f_{i}s_{i}^{m_{i} - 1} = p_{i1}^{\delta_{i1}} \cdots p_{ir_{i}}^{\delta_{ir_{i}}}$ be the factorisation into primes and $R_{i}$ denotes the set containing the reduced irrational numbers reduced form the numbers $\sqrt[m_i]{p_{i1}^{\delta_{i1}}}, \ldots, \sqrt[m_i]{p_{ir_{i}}^{\delta_{ir_{i}}}}$. Let $R = \bigcup_{i = 1}^{n}R_{i}$. If $R$ is singleton set $R = S$. Otherwise, let $\sqrt[\beta_{1}]{p_{1}^{a_{1}}}, \sqrt[\beta_{2}]{p_{2}^{a_{2}}}, \ldots, \sqrt[\beta_{z}]{p_{z}^{a_{z}}}$ be the distinct elements of $R$ where $p_{i}$'s are primes. Clearly $gcd(\beta_{i}, a_{i}) = 1$ for all $i$ with $1 \le i \le z$. Now for each $i$ with $1 \le i \le z$, choose $u_{ij} \in \mathbb{N}$ such that $a_{i} = u_{i1} + u_{i2} + \cdots + u_{iv_{i}}$ and $\frac{\beta_{i}}{u_{ij}} = \theta_{j} \in \mathbb{N}$, $\forall j$ with $1 \le j \le v_{i}$. Further, for $1 \le i \le z$ let  $L_{i} = \{\sqrt[\theta_{j}]{p_{i}} \mid 1 \le j \le v_{i}\}$ and $L = \bigcup_{i = 1}^{z}L_{i}$. Now, if $L$ is singleton then set $L = S$. Otherwise, let $\sqrt[\mu_{1}]{q_{1}}, \sqrt[\mu_{2}]{q_{2}}, \ldots, \sqrt[\mu_{y}]{q_{y}}$ be the distinct elements of $L$ where $q_{i}$'s are primes for $1 \le i \le y$. For $1 \le k \le t$, let $q_{kl_{k}}$ be the distinct primes that appear $l_{k}$ times inside the radical signs, where $l_{1} + l_{2} + \cdots + l_{t} = y$. Now for each $q_{kl_{k}}$ with $1 \le k \le t$, let $\sqrt[\mu^{\prime}_{1}]{q_{kl_{k}}}, \sqrt[\mu^{\prime}_{2}]{q_{kl_{k}}}, \ldots, \sqrt[\mu_{l_{k}}^{\prime}]{q_{kl_{k}}}$ be the corresponding radicals and $\eta_{k} = lcm(\mu^{\prime}_{1}, \mu^{\prime}_{2}, \ldots, \mu^{\prime}_{l_{k}})$. Set $S = \{\sqrt[\eta_{k}]{q_{kl_{k}}} \mid 1 \le k \le t\}$. We claim that $S$ is a reduced set. To see this, observe that if the number 
$$\prod_{k = 1}^{t}(\sqrt[\eta_{k}]{q_{kl_{k}}})^{\epsilon_{k}} = \sqrt[(\eta_{1}\eta_{2} \cdots \eta_{t})]{\prod_{k = 1}^{t}q_{kl_{k}}^{(\epsilon_{k}\eta_{1} \cdots \eta_{k - 1}\eta_{k + 1} \cdots \eta_{t})}}$$ 
would be a rational number, where not all $\epsilon_{k}$'s are zero and $0 \le \epsilon_{k} \le \eta_{k} - 1$, then since $q_{kl_{k}}$'s are the distinct primes so it must be that $(\eta_{1}\eta_{2} \cdots \eta_{t}) \mid (\epsilon_{k}\eta_{1} \cdots \eta_{k - 1}\eta_{k + 1} \cdots \eta_{t})$ for each $k$ such that $\epsilon_{k} \ne 0$. But this means $\eta_{k} \mid \epsilon_{k}$ which is absurd. This completes the proof.
\end{proof}
  
Now we give an example to find the reduced set for the number $\sqrt[8]{12} - \frac{2}{3}\sqrt[27]{108} + \frac{1}{2}\sqrt{12}$ by applying the above lemma(3). Here, $c_{1} = 12$, $c_{2} = 108$, $c_{3} = 12$ and $m_{1} = 8$, $m_{2} = 27$, $m_{3} = 2$. Also, $f_{1} = 12$, $f_{2} = 108$, $f_{3} = 12$ and $s_{1} = 1$, $s_{2} = 1$, $s_{3} = 1$. Let $f_{1}s_{1}^{7} = f_{3}s_{3} = 12 = 2^{2}3$ and $f_{2}s_{2}^{26} = 108 = 2^{2}3^{3}$ be the factorisation into primes. Further, $R_{1} = \{\sqrt[4]{2}, \sqrt[8]{3}\}$, $R_{2} = \{\sqrt[27]{4}, \sqrt[9]{3}\}$, $R_{3} = \{\sqrt{3}\}$ and $R = \{\sqrt[4]{2}, \sqrt[8]{3}, \sqrt[27]{4}, \sqrt[9]{3}, \sqrt{3}\}$. Now observe that we have $\beta_{1} = 4$, $\beta_{2} = 8$, $\beta_{3} = 27$, $\beta_{4} = 9$, $\beta_{5} = 2$, $a_{1} = 1$, $a_{2} = 1$, $a_{3} = 2$, $a_{4} = 1$, $a_{5} = 1$ and $p_{1} = 2$, $p_{2} = 3$, $p_{3} = 2$, $p_{4} = 3$, $p_{5} = 3$. So we only have to look at the element $a_{3} = 2$ as rest all of $a_{i}$'s are 1. Since, $2 = 1 + 1$, so $L_{3} = \{\sqrt[27]{2}\}$ and $L_{1} = \{\sqrt[4]{2}\}$, $L_{2} = \{\sqrt[8]{3}\}$, $L_{4} = \{\sqrt[9]{3}\}$, $L_{5} = \{\sqrt{3}\}$. So we get $L = \{\sqrt[4]{2}, \sqrt[8]{3}, \sqrt[27]{2}, \sqrt[9]{3}, \sqrt{3}\}$. As, $lcm(4, 27) = 108$ and $lcm(8, 9, 2) = 72$, so we get that $S = \{\sqrt[108]{2}, \sqrt[72]{3}\}$ and $\sqrt[8]{12} - \frac{2}{3}\sqrt[27]{108} + \frac{1}{2}\sqrt{12} \in \mathbb{Q}(\sqrt[108]{2}, \sqrt[72]{3})$.

$$\textbf{Proof of the Theorem 1}$$

\begin{proof}
Let $\alpha \in \mathcal{S}$. Then the lemma(3) guarantees that there exists natural number $k$ and a reduced set $S = \{\sqrt[m_{i}]b_{i} \mid 1 \le i \le k\}$ such that we have, $\alpha = \gamma_{0} + \gamma_{1}\sqrt[m_{k}]{b_{k}^{i_{1}}} + \cdots + \gamma_{M}\sqrt[m_{k}]{b_{k}^{i_{M}}}$, where $\gamma_{j} \in K - \{0\}$ for $0 \le j \le M$, $0 \le i_{1} \le i_{2} \ldots \le i_{M - 1} < i_{M} < m_{k}$, $K = \mathbb{Q}(S - \{\sqrt[m_{k}]b_{k}\})$.

Now, suppose on the contrary we have $\alpha = \frac{p}{q}$ for some $p \in \mathbb{Z}$ and $q \in \mathbb{N}$. Consider the polynomials $f(X) = q\gamma_{M}X^{i_{M}} + \ldots + q\gamma_{1}X^{i_{1}} + q\gamma_{0} - p$ and $g(X) = X^{m_{k}} - b_{k}$. Since, $\sqrt[m_{k}]{b_{k}}$ is a common zero for both of the polynomials, so $gcd(f(X), g(X)) = h(X)$ exists in $K[X]$. Since $h(X)$ divides $g(X)$ thus, the zeros of $h(X)$ are of the form $\omega\sqrt[m_{k}]{b_{k}}$ where $\omega$ is some $m_{k}$-th root of unity. If $h_{0} \in K - \{0\}$ be the constant term of the polynomial $h(X)$, then we have $\prod{(\omega\sqrt[m_{k}]{b_{k}})} = h_{0}$, where the product is taken over all zeros of $h(X)$. Now taking modulus on both sides, we obtain that $|h_{0}| = \sqrt[m_{k}]{b_{k}^{deg(h)}}$, that is, $\sqrt[m_{k}]{b_{k}^{deg(h)}} \in K$. 

Let $\mathcal{B} \subseteq H$ be a basis for the $\mathbb{Q}$ vector space $K$, where $H$ denotes the set $$\{\prod_{i = 1}^{k - 1}(\sqrt[m_{i}]{b_{i}})^{\epsilon_{i}} \mid 0 \le \epsilon_{i} \le (m_{i} - 1)\}.$$ We claim that $Tr_{K/\mathbb{Q}}(\sqrt[m_{k}]{b_{k}^{deg(h)}}\beta)$ is nonzero for at least one $\beta$ in $\mathcal{B}$, where $Tr_{K/\mathbb{Q}} : K \rightarrow \mathbb{Q}$ is the well known trace function. It is because, if $Tr_{K/\mathbb{Q}}(\sqrt[m_{k}]{b_{k}^{deg(h)}}\beta) = 0$ $\forall \beta \in \mathcal{B}$, then since $\{\sqrt[m_{k}]{b_{k}^{deg(h)}}\beta \mid \beta \in \mathcal{B}\}$ forms a basis for the $\mathbb{Q}$ vector space $K$, so for any $u \in K$ with $u = \sum_{\beta \in \mathcal{B}}c_{\beta}(\sqrt[m_{k}]{b_{k}^{deg(h)}}\beta)$ for some $c_{\beta} \in \mathbb{Q}$, we get that $$Tr_{K/\mathbb{Q}}(u) = Tr_{K/\mathbb{Q}}(\sum_{\beta \in \mathcal{B}}c_{\beta}\sqrt[m_{k}]{b_{k}^{deg(h)}}\beta) = \sum_{\beta \in \mathcal{B}}c_{\beta}Tr_{K/\mathbb{Q}}(\sqrt[m_{k}]{b_{k}^{deg(h)}}\beta) = 0,$$ whereas $Tr_{K/\mathbb{Q}}(1) = |\mathcal{B}|$. Let $\beta^{*} \in \mathcal{B}$ be such that $Tr_{K/\mathbb{Q}}(\sqrt[m_{k}]{b_{k}^{deg(h)}}\beta^{*}) \ne 0$. Now if $m(X) = X^{d} + a_{d - 1}X^{d - 1} + \cdots + a_{1}X + a_{0}$ be the minimal polynomial for $\sqrt[m_{k}]{b_{k}^{deg(h)}}\beta^{*}$ over $\mathbb{Q}$, then certainly $a_{d - 1} = -\frac{d}{|\mathcal{B}|}Tr_{K/\mathbb{Q}}(\sqrt[m_{k}]{b_{k}^{deg(h)}}\beta^{*})$ is non-zero. But from the very next discussion on the lemma(2) we know that the minimal polynomial for $\sqrt[m_{k}]{b_{k}^{deg(h)}}\beta^{*}$ over $\mathbb{Q}$ is of the form $X^{s} - (\sqrt[m_{k}]{b_{k}^{deg(h)}}\beta^{*})^{s}$ for some $s \in \mathbb{N}$. Since the monic minimal polynomial is unique, therefore we must have $s = d$ and $a_{d - 1} = a_{0}$, that is $d = 1$. Hence we get that $(\sqrt[m_{k}]{b_{k}})^{deg(h)}\beta^{*} \in \mathbb{Q}$. Since $deg(h) < m_{k}$ and $\beta^{*} \in H$, so this contradicts the fact that $S$ is a reduced set, completing the proof of the theorem.
\end{proof}

\end{document}